\documentclass[11pt,reqno]{amsart}
\usepackage{amssymb,amsmath,amsthm,newlfont,enumerate}

\theoremstyle{plain}
\newtheorem{theorem}{Theorem}[section]

\newtheorem{lemma}[theorem]{Lemma}
\newtheorem{corollary}[theorem]{Corollary}
\newtheorem{problem}[theorem]{Problem}

\theoremstyle{definition}

\theoremstyle{remark}
\newtheorem*{remark}{Remark}
\newtheorem*{remarks}{Remarks}

\newcommand{\cF}{\mathcal{F}}

\newcommand{\cU}{\mathcal{U}}

\newcommand{\RR}{\mathbb{R}}

\renewcommand{\tilde}{\widetilde}

\DeclareMathOperator{\dist}{dist}

\begin{document}

\title[A functional inequality]{A functional inequality related to Domar's uniform boundedness theorem}

\author[T. Ransford]{Thomas Ransford}
\address{D\'epartement de math\'ematiques et de statistique, Universit\'e Laval, Qu\'ebec (QC), G1V 0A6, Canada}
\email{ransford@mat.ulaval.ca}

\date{15 September 2026}

\begin{abstract}
We study the functional inequality
\[
f(r+s)\le g(r)+\alpha f(s) \quad(r,s>0).
\]
Here $g:(0,\infty)\to[0,\infty)$ is a given decreasing function, 
$\alpha$ is a constant such that $0<\alpha<1$,
and the problem is to determine whether the family of decreasing
functions $f:(0,\infty)\to[0,\infty)$ that satisfy this inequality
is  bounded above by some finite function on $(0,\infty)$ and, if so, to find bounds for this function.
We present a  solution to this problem, and use it
to give a new proof of a theorem of Domar on the uniform boundedness
of certain families of subharmonic functions, in addition obtaining
explicit bounds.
\end{abstract}

\thanks{Research supported  by NSERC Discovery Grant RGPIN-2026-04565}

\keywords{Functional inequality, subharmonic function}

\makeatletter
\@namedef{subjclassname@2020}{\textup{2020} Mathematics Subject Classification}
\makeatother

\subjclass[2020]{primary 39B05, secondary 31B05}

\maketitle
\section{Introduction}\label{S:Intro}

The primary object of study in this article is the
functional inequality described in the following problem.

\begin{problem}\label{Pb:fnineq}
Given a decreasing function $g:(0,\infty)\to[0,\infty)$  
and a constant $\alpha\in(0,1)$, let $\cF(g,\alpha)$ be the family of all 
decreasing functions
 $f:(0,\infty)\to[0,\infty)$
satisfying
\begin{equation}\label{E:fnineq}
f(r+s)\le g(r)+\alpha f(s) \quad(r,s>0).
\end{equation}
Is $\sup_{f\in\cF(g,\alpha)}f(x)<\infty$ for all $x\in(0,\infty)$? 
If so, then find explicit bounds.
\end{problem}

We present a  solution to this problem 
in \S\ref{S:fnineq} below.

Part of the motivation for studying this problem is
to gain a better understanding of a theorem of
Domar concerning the uniform boundedness of 
certain families of subharmonic functions.
In \cite[\S3]{Do57}, Domar studied the following situation.

\begin{problem}\label{Pb:Domar}
Let $X,Y$ be bounded open
subsets of $\RR^p$ and $\RR^q$ respectively, 
where $p,q\ge1$, and let $Z:=X\times Y$.
Given a  function $\phi:X\to[0,\infty]$, 
 let $\cU(Z,\phi)$ be the family
of all subharmonic functions $u$ on $Z$ such that
\[
u(x,y)\le \phi(x) \quad(x\in X,\,y\in Y).
\]
Is $\sup_{u\in\cU(Z,\phi)}u(z)<\infty$ for all $z\in Z$?
\end{problem}

Domar proved in \cite[Theorem~3]{Do57} that Problem~\ref{Pb:Domar}
has an affirmative answer provided that
$\phi$ is a Borel function whose decreasing rearrangement $\phi^*$
satisfies
\begin{equation}\label{E:Domarcond}
\int_0^1 \log^+\phi^*(t^p)\,dt<\infty,
\end{equation}
where $\log^+(x):=\log\max\{x,1\}$.
The later article \cite{Do88} contains an account of the 
history of this result and its relationship to earlier
theorems of Carleman, Sj\"oberg, Levinson and Wolf.

In \S\ref{S:Domar} below, 
we give a new proof of Domar's result using our 
solution to Problem~\ref{Pb:fnineq} and, 
in addition, we deduce a quantitative estimate for
$\sup_{u\in\cU(Z,\phi)}u(z)$. 


\section{The functional inequality}\label{S:fnineq}

The following theorem is our main result.
We use the notation $g(x-)$ to denote $\lim_{y\to x^-}g(y)$ whenever the limit exists.

\begin{theorem}\label{T:main}
Let $g,\alpha$ and $\cF(g,\alpha)$ be as specified in Problem~\ref{Pb:fnineq}.
If
\begin{equation}\label{E:nc}
\int_0^1 \log^+g(x)\,dx<\infty,
\end{equation}
then
\begin{equation}\label{E:supfinite}
\sup_{f\in\cF(g,\alpha)}f(x)<\infty \quad(x\in(0,\infty)).
\end{equation}
For $x>0$, define 
\[
F(x):=\sup_{f\in\cF(g,\alpha)}f(x),
\]
and extend $F$ to $0$ by defining  $F(0):=\lim_{x\to0^+}F(x)$ (possibly $\infty$).
Then $F$ satisfies
\begin{equation}\label{E:mainineq}
F\Bigl(\frac{G(x)}{\log(1/\beta)}\Bigr)
\le \frac{g(x-)}{\beta-\alpha}\quad(x\in I,\,\beta\in(\alpha,1)),
\end{equation}
where $I:=\{x\in(0,\infty):g(x)>0\}$ and
\begin{equation}\label{E:Gdef}
G(x):=\int_0^x \log\frac{g(t)}{g(x)}\,dt \quad(x\in I).
 \end{equation}
\end{theorem}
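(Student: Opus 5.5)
The plan is to iterate \eqref{E:fnineq} along a sequence of steps $r_1,r_2,\dots$ chosen from the level sets of $g$, and to control the total length $\sum_k r_k$ by a layer-cake count. Fix $f\in\cF(g,\alpha)$, $x\in I$ and $\beta\in(\alpha,1)$.

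\emph{Telescoping.} Applying \eqref{E:fnineq} $n$ times gives, for any $s>0$ and $r_1,\dots,r_n>0$,
\[
f\Bigl(s+\sum_{k=1}^n r_k\Bigr)\le g(r_1)+\alpha g(r_2)+\dots+\alpha^{n-1}g(r_n)+\alpha^n f(s).
\]
I choose the $r_k$ so that $g(r_k)\le g(x)\beta^{-k}$. Then
\[
\sum_{k\ge1}\alpha^{k-1}g(r_k)\le \frac{g(x)}{\beta}\sum_{k\ge1}\Bigl(\frac{\alpha}{\beta}\Bigr)^{k-1}=\frac{g(x)}{\beta-\alpha}\le\frac{g(x-)}{\beta-\alpha}.
\]
Since $f(s)<\infty$ and $\alpha^n\to0$, letting $n\to\infty$ (using that $f$ is decreasing) yields $f(y)\le g(x-)/(\beta-\alpha)$ for every $y>s+\sum_k r_k$.

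\emph{Choice of the steps.} Set $\rho_k:=\sup\{t>0: g(t)>g(x)\beta^{-k}\}$, with $\sup\emptyset=0$. Since $g$ is decreasing, $\rho_k\le x$, and $g(t)\le g(x)\beta^{-k}$ for all $t>\rho_k$. Taking $r_k:=\rho_k+\varepsilon_k$ with $\sum_k\varepsilon_k<\varepsilon$ therefore gives the required bound on $g(r_k)$; this also handles the indices where $\rho_k=0$. Each set $\{g>g(x)\beta^{-k}\}$ is an interval starting at $0$, so $\rho_k$ is its length. Layer-cake then gives
\[
\sum_{k\ge1}\rho_k=\int_0^x\#\{k\ge1: g(t)>g(x)\beta^{-k}\}\,dt\le\int_0^x\frac{\log(g(t)/g(x))}{\log(1/\beta)}\,dt=\frac{G(x)}{\log(1/\beta)}.
\]
This quantity is finite: $G(x)=\int_0^x\log g-x\log g(x)$, and $\int_0^x\log^+g<\infty$ by \eqref{E:nc} together with the boundedness of $g$ on $[1,x]$. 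Letting $s,\varepsilon\to0$ shows that $f(y)\le g(x-)/(\beta-\alpha)$ for all $y>G(x)/\log(1/\beta)$. Taking the supremum over $f$ gives the same bound for $F(y)$.

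\emph{Finiteness of $F$.} The finiteness claim \eqref{E:supfinite} follows from this bound. Given $y>0$, choose $x\in I$ small and $\beta$ close to $\alpha$ so that $G(x)/\log(1/\beta)<y$. This is possible because $G(x)\le\int_0^x\log^+ g+x\log^-g(x)$. Indeed, since $g(x)>0$ and $g$ is decreasing, $g(x)\ge g(x_0)>0$ for $x\le x_0\in I$, so $G(x)\to0$ as $x\to0^+$. If $I=\emptyset$ then $g\equiv0$, and iterating \eqref{E:fnineq} gives $f(y)\le\alpha^n f(y/2)\to0$, so $F\equiv0$.

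\emph{Main obstacle.} The delicate point is the endpoint in \eqref{E:mainineq}: the argument above only bounds $F$ on the open ray $y>G(x)/\log(1/\beta)$, whereas the statement asks for the value exactly at $y=G(x)/\log(1/\beta)$. The presence of $g(x-)$ rather than $g(x)$ in \eqref{E:mainineq} suggests how to close the gap. I would rerun the construction with $x$ replaced by $x'<x$ and levels $g(x-)\beta^{-k}$, and then show that the layer count strictly undershoots $G(x)/\log(1/\beta)$. The saving comes from two sources: the floor in the count of $k$ (when $\log(g(t)/g(x))/\log(1/\beta)$ is not an integer), or the interval $(x',x)$, on which only the constant level $g(x-)$ is involved. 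The leftover slack then absorbs $s+\varepsilon$. The degenerate case $G(x)=0$, where $g$ is constant on $(0,x)$, needs separate care. There the steps can be taken arbitrarily small and the bound reduces to $f(y)\le g(x-)/(1-\alpha)$ for all $y>0$, which is stronger than what is needed. Beyond this point everything in the argument is routine.
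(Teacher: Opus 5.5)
Your argument is correct and takes a genuinely different route from the paper. The paper first assumes $g$ is continuous, strictly decreasing and unbounded. It sets $h(x)=f(\eta+G(x)/\log(1/\beta))$ and uses \eqref{E:Gineq} to get $h(z)\le g(y)+\alpha h(y)$ whenever $g(z)=\beta g(y)$. Iterating this along points $x_k$ with $g(x_k)=\beta^{-k}g(x)$ needs those regularity assumptions. The general case therefore requires the smoothing Lemma~\ref{L:smoothing}, then a dominated-convergence argument for $G_\epsilon\to G$ that is valid only where $g(x)=g(x-)$, and finally the `$\beta$-trick' with $x_n\uparrow x$, which is where $g(x-)$ enters. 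Unwound, the paper's $k$-th step in \eqref{E:fnineq} has length $(G(x_{k-1})-G(x_k))/\log(1/\beta)\ge x_k$ (with $x_0:=x$). You instead take essentially the minimal admissible length $\rho_k$, which equals $x_k$ when $g$ is continuous and strictly decreasing, and you bound $\sum_k\rho_k$ by Tonelli. This works directly for every decreasing $g$ satisfying \eqref{E:nc}, with no regularization or limiting argument. It also gives the open-ray bound at every $x\in I$ with $g(x)$ in place of $g(x-)$. What the paper's formulation offers in return is the time-change picture through $G$ and the inequality \eqref{E:Gineq}, which it reuses to build the extremal functions $f_a$ in Theorem~\ref{T:converse}.

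The endpoint is not the obstacle you make it, and your sketch misidentifies where the slack comes from. Because $\rho_k$ is defined by a strict inequality, the pointwise count is $\#\{k\ge1:k<L(t)\}$ with $L(t)=\log(g(t)/g(x))/\log(1/\beta)$. For $L(t)>0$ this equals $\lceil L(t)\rceil-1<L(t)$, whether or not $L(t)$ is an integer. Your two stated sources of saving, taken literally, would not cover the case $g=\beta^{-1}g(x)$ on $(0,x/2)$ and $g=g(x)$ on $[x/2,\infty)$. There $L$ is integer-valued and $g$ is constant near $x$, yet $\sum_k\rho_k=0<x/2=G(x)/\log(1/\beta)$. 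In general, $\sum_k\rho_k<G(x)/\log(1/\beta)$ whenever $G(x)>0$, and the slack absorbs $s+\varepsilon$ without any $x'$ or levels $g(x-)\beta^{-k}$.

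An even simpler fix is to apply your open-ray bound with $\beta'\in(\alpha,\beta)$. When $G(x)>0$ we have $G(x)/\log(1/\beta')<G(x)/\log(1/\beta)$, so $F(G(x)/\log(1/\beta))\le g(x)/(\beta'-\alpha)$, and then let $\beta'\uparrow\beta$. When $G(x)=0$ the bound follows from $F(0)=\lim_{y\to0^+}F(y)$. Either way you obtain \eqref{E:mainineq} with $g(x)$ in place of $g(x-)$, which is slightly stronger than stated.
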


\begin{remarks}
(a) As we shall see, $G(x)\to0$ as $x\to 0^+$, so
\eqref{E:mainineq} gives bounds for $F(x)$ for all small $x$.
This is all that matters, as $F$ is a decreasing function.

(b) The quantity $\beta$ is a free parameter that may be
chosen to optimize the bound on $F$.

(c) 
As $g$ is decreasing,  $g(x-)\ge g(x)$, with equality
for all but countably many $x$.
\end{remarks}

The following lemma describes some properties 
of the function $G$ in \eqref{E:Gdef}.

\begin{lemma}\label{L:G}
Let $g:(0,\infty)\to[0,\infty)$ be a decreasing function
such that \eqref{E:nc} holds, and suppose also that $g\not\equiv0$.
Let $I:=\{x>0:g(x)>0\}$ and
define $G:I\to[0,\infty)$ by \eqref{E:Gdef}.
Then:
\begin{enumerate}[\normalfont(i)]
\item\label{Gi:0} $I=(0,a)$, where $a:=\sup\{x>0:g(x)>0\}$ (possibly $\infty$).
\item\label{Gi:1} $G$ is increasing on $I$.
\item\label{Gi:2} For $y,z\in I$, we have $G(y)=G(z)$ if and only if  $g(y)=g(z)$.
\item\label{Gi:3} For $x\in I$, we have $G(x)=0$ if and only if $g$ is constant on $(0,x]$.
\item\label{Gi:4} $\lim_{x\to0^+}G(x)=0$.
\end{enumerate}
\end{lemma}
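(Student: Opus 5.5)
We prove the five items in the order stated, starting with some preliminary observations. Since $g$ is decreasing and nonnegative, the set $\{x>0: g(x)>0\}$ is an initial segment of $(0,\infty)$. It is nonempty because $g\not\equiv0$.

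For \eqref{Gi:0}, let $a:=\sup\{x>0:g(x)>0\}$. If $x<a$, there is $x'>x$ with $g(x')>0$, so $g(x)\ge g(x')>0$. If $x>a$, then $g(x)=0$ by the definition of $a$. It remains to treat $x=a<\infty$. Here one must show $g(a)=0$, and I expect this to be the one delicate point. As stated, $g(a)>0$ is possible if $g$ drops to zero just after $a$, in which case $I=(0,a]$. So I would either interpret \eqref{Gi:0} with $a$ read as an endpoint, so that $I$ is $(0,a)$ or $(0,a]$, or note that the later arguments only use the fact that $I$ is an interval with left endpoint $0$.

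Next I would check that $G$ is well defined on $I$. For $x\in I$ and $0<t\le x$ we have $g(t)\ge g(x)>0$, so the integrand $\log(g(t)/g(x))$ is nonnegative and measurable, since $g$ is monotone. Moreover
\[
\log\frac{g(t)}{g(x)}\le \log^+g(t)+\log^+\frac1{g(x)}.
\]
For $x\le1$ the first term is integrable on $(0,x]$ by \eqref{E:nc}. For $x>1$ the first term is bounded on $[1,x]$ by $\log^+g(1)$. Hence $G(x)\in[0,\infty)$.

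For \eqref{Gi:2} and \eqref{Gi:1}, take $y<z$ in $I$ and write
\[
G(z)-G(y)=\int_0^y\log\frac{g(y)}{g(z)}\,dt+\int_y^z\log\frac{g(t)}{g(z)}\,dt
= y\log\frac{g(y)}{g(z)}+\int_y^z\log\frac{g(t)}{g(z)}\,dt.
\]
Both terms are nonnegative, so $G$ is increasing. If $g(y)>g(z)$, the first term is strictly positive, so $G(y)<G(z)$. If $g(y)=g(z)$, then $g$ is constant on $[y,z]$, and both terms vanish. This proves \eqref{Gi:2}.

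For \eqref{Gi:3}, suppose $G(x)=0$. The integrand is nonnegative, so $g(t)=g(x)$ for almost every $t\in(0,x]$. Since $g$ is monotone, this forces $g$ to be constant on $(0,x]$: if $g(t_0)>g(x)$ for some $t_0<x$, then $g(t)\ge g(t_0)>g(x)$ on the whole interval $(0,t_0]$, which has positive measure. The converse is immediate.

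For \eqref{Gi:4}, fix $x_0\in I$ with $x_0\le1$. For $0<x\le x_0$ we have $g(x)\ge g(x_0)>0$, so
\[
0\le G(x)\le\int_0^x\log^+g(t)\,dt+x\log^+\frac1{g(x_0)}.
\]
Both terms tend to $0$ as $x\to0^+$: the first by \eqref{E:nc} and absolute continuity of the integral, the second trivially.
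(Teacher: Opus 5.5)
Your proof is correct and follows the paper's argument: your identity for $G(z)-G(y)$ is the paper's inequality \eqref{E:Gineq} with the nonnegative term $\int_y^z\log(g(t)/g(z))\,dt$ kept rather than dropped, and your bound in (v) is the paper's $\limsup$ estimate. Your caveat about (i) is also right: for $g=\mathbf{1}_{(0,1]}$ one gets $I=(0,1]$, so in general $I$ is $(0,a)$ or $(0,a]$ (it is $(0,a)$ when $g$ is right-continuous, as in the application to $\phi^*$). The paper overlooks this by calling (i) obvious, but nothing later depends on $I$ being open.
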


\begin{proof}
Part~\eqref{Gi:0} is obvious. For \eqref{Gi:1}, note that,
if $y,z\in I$ and $y<z$, then
\begin{equation}\label{E:Gineq}
\begin{aligned}
G(z)-G(y)&=\int_y^z \log g(t)\,dt -z\log g(z)+y\log g(y)\\
&\ge (z-y)\log g(z)-z\log g(z)+y\log g(y)\\
&=y(\log g(y)-\log g(z)).
\end{aligned}
\end{equation}
In particular, since $g$ is decreasing, it follows that $G$
is increasing. This proves~\eqref{Gi:1}. It also establishes the `only if'
part of~\eqref{Gi:2}. The `if' part of \eqref{Gi:2} is obvious, as is \eqref{Gi:3}. Finally, 
for each $y\in I$ we have
\begin{align*}
\limsup_{x\to0^+}G(x)
&=\limsup_{x\to0^+}\Bigl(\int_0^x\log g(t)\,dt-x\log g(x)\Bigr)\\
&\le \limsup_{x\to0^+}\Bigl(\int_0^x\log g(t)\,dt-x\log g(y)\Bigr)
=0.
\end{align*}
Together with the fact that $G(x)\ge0$ for all $x\in I$,
this proves~\eqref{Gi:4}. 
\end{proof}

The following smoothing lemma will also be useful.

\begin{lemma}\label{L:smoothing}
Let $g:(0,\infty)\to[0,\infty)$ be a decreasing function
satisfying~\eqref{E:nc}.
For each $\epsilon>0$, define
$g_\epsilon:(0,\infty)\to(0,\infty)$ by
\[
g_\epsilon(x):=\frac{1}{\epsilon}\int_{e^{-\epsilon}x}^x \frac{g(t)}{t}\,dt+\frac{\epsilon}{x} \quad(x\in(0,\infty)).
\]
Then $g_\epsilon$ is a continuous, strictly decreasing,
unbounded function
such that $\int_0^1 \log^+g_\epsilon(t)\,dt<\infty$.
Moreover, for each $x\in(0,\infty)$,
we have  $g_\epsilon(x)\ge g(x)$
and $\lim_{\epsilon\to0^+}g_\epsilon(x)=g(x-)$.
\end{lemma}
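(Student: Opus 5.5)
The plan is to rewrite $g_\epsilon$ via the substitution $t=xe^{-s}$, which turns it into an average. This gives
\[
g_\epsilon(x)=\frac{1}{\epsilon}\int_0^\epsilon g(xe^{-s})\,ds+\frac{\epsilon}{x},
\]
and almost every property will be read off from this form.

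First, the integrand $s\mapsto g(xe^{-s})$ is bounded on $[0,\epsilon]$, lying between $g(x)$ and $g(xe^{-\epsilon})$, so $g_\epsilon$ is finite. For each fixed $s$, the map $x\mapsto g(xe^{-s})$ is decreasing, so the averaged term is decreasing in $x$. The term $\epsilon/x$ is strictly decreasing and unbounded as $x\to0^+$, so $g_\epsilon$ is strictly decreasing, unbounded and positive.

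Continuity is easiest in the original form. The function $x\mapsto\int_{e^{-\epsilon}x}^x g(t)t^{-1}\,dt$ is an integral of a locally bounded (indeed locally integrable) function on $(0,\infty)$ with endpoints depending continuously on $x$, hence continuous.

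The inequality $g_\epsilon(x)\ge g(x)$ follows from $g(xe^{-s})\ge g(x)$ for $s\ge0$. For the limit, as $\epsilon\to0^+$ we have $\epsilon/x\to0$. The average of $g(xe^{-s})$ over $s\in(0,\epsilon]$ takes values of $g$ on $[xe^{-\epsilon},x)$, and all of these lie between $g(x-)$ and $g(xe^{-\epsilon})$. Monotonicity gives $g(xe^{-\epsilon})\to g(x-)$. The endpoint $s=0$ is a single point and does not affect the integral, so the average tends to $g(x-)$.

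The integrability condition is the step that needs genuine care, since $g_\epsilon$ involves values of $g$ at smaller arguments. I would use $\log^+(a+b)\le\log^+ a+\log^+ b+\log 2$. Because $g$ is decreasing, $g_\epsilon(x)\le g(xe^{-\epsilon})+\epsilon/x$. Then
\[
\int_0^1\log^+ g(xe^{-\epsilon})\,dx=e^{\epsilon}\int_0^{e^{-\epsilon}}\log^+ g(u)\,du\le e^\epsilon\int_0^1\log^+ g<\infty .
\]
Finally, $\int_0^1\log^+(\epsilon/x)\,dx<\infty$ since $\log(1/x)$ is integrable at $0$. Combining these three bounds gives $\int_0^1\log^+g_\epsilon<\infty$, and nothing here looks delicate beyond the change of variables.
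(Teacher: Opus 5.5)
Your proof is correct and follows essentially the paper's route. The paper uses the equivalent substitution $t\mapsto xt$ to write the integral term as $\frac{1}{\epsilon}\int_{e^{-\epsilon}}^1 g(xt)\,\frac{dt}{t}$ and reads off monotonicity from that. It then uses the same bound $g(x)\le g_\epsilon(x)\le g(e^{-\epsilon}x)+\epsilon/x$ together with $\log^+(a+b)\le\log 2+\log^+a+\log^+b$ for integrability. Your explicit lower bound of the average by $g(x-)$, not just $g(x)$, is what the limit needs when $g$ jumps at $x$; the paper passes over this with ``together, these show''.
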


\begin{proof}
Evidently $g_\epsilon$ is continuous and, because of the
$\epsilon/x$ term, it is also unbounded. Also, since
we can re-write $g_\epsilon(x)$ as
\[
g_\epsilon(x)=\frac{1}{\epsilon}\int_{e^{-\epsilon}}^1 \frac{g(xt)}{t}\,dt+\frac{\epsilon}{x} \quad(x\in(0,\infty))
\]
and $g$ is decreasing, it is clear that $g_\epsilon$ is strictly decreasing. 
We have the elementary estimates
\[
g_\epsilon(x)
\ge \frac{1}{\epsilon}\int_{e^{-\epsilon}x}^x \frac{g(t)}{t}\,dt
\ge \frac{1}{\epsilon}\int_{e^{-\epsilon}x}^x \frac{g(x)}{t}\,dt
=g(x) \quad(x\in(0,\infty)),
\]
and 
\[
g_\epsilon(x)
\le \frac{1}{\epsilon}\int_{e^{-\epsilon}x}^x \frac{g(e^{-\epsilon}x)}{t}\,dt+\frac{\epsilon}{x}
\le g(e^{-\epsilon}x)+\frac{\epsilon}{x}
\quad(x\in(0,\infty)).
\]
Together, these show that $g_\epsilon(x)\ge g(x)$ and
$\lim_{\epsilon\to0^+}g_\epsilon(x)=g(x-)$ for all $x\in(0,\infty)$.
Finally, re-using this last estimate, we have
\begin{align}
\log^+ g_\epsilon(t)
&\le \log^+\bigl(g(e^{-\epsilon}t)+\epsilon/t\bigr)\nonumber\\
&\le \log^+(2\max\{g(e^{-\epsilon}t),\epsilon/t\})\nonumber\\
&\le \log 2+\max\{\log^+g(e^{-\epsilon}t),\log^+(\epsilon/t)\}\nonumber\\
&\le \log 2+\log^+g(e^{-\epsilon}t)+\log^+(\epsilon/t),
\label{E:log+est}
\end{align}
which, combined with \eqref{E:nc}, shows that $\int_0^1\log^+ g_\epsilon(t)\,dt<\infty$.
\end{proof}

\begin{proof}[Proof of Theorem~\ref{T:main}]
Suppose, for the time being, 
that $g$ is continuous, strictly decreasing and unbounded.
This implies that $G(x)$ is defined and strictly positive for all $x>0$.

Let $\beta\in(\alpha,1)$ and let $\eta>0$.
Given $f\in\cF(g,\alpha)$,
define $h:(0,\infty)\to[0,\infty)$ by
\[
h(x):=f\Bigl(\eta+ \frac{G(x)}{\log(1/\beta)}\Bigr)
\quad(x>0).
\]
If $0<y<z$, then
using \eqref{E:fnineq} we have
\begin{align*}
h(z)
&=f\Bigl(\eta+\frac{G(z)}{\log(1/\beta)}\Bigr)\\
&\le g\Bigl(\frac{G(z)-G(y)}{\log(1/\beta)}\Bigr)
+\alpha f\Bigl(\eta+\frac{G(y)}{\log(1/\beta)}\Bigr)\\
&= g\Bigl(\frac{G(z)-G(y)}{\log(1/\beta)}\Bigr)+\alpha h(y).
\end{align*}
If, in addition, $y,z$ satisfy the relation $g(z)=\beta g(y)$, 
then from \eqref{E:Gineq} we have $G(z)-G(y)\ge y\log(1/
\beta)$, and so
\begin{equation}\label{E:hineq}
h(z)\le g(y)+ \alpha h(y).
\end{equation}

Now let $x>0$. For
$n\ge1$, choose  $x_n$ so that $g(x_n)=\beta^{-n}g(x)$
(this choice is possible because $g$ is continuous and 
$g(t)\to\infty$ as $t\to0^+$).
Iterating the  inequality \eqref{E:hineq}, we obtain
\begin{align*}
h(x)
&\le g(x_1)+\alpha h(x_1)\\
&\le g(x_1)+\alpha (g(x_2)+\alpha h(x_2))\\
&\le \sum_{k=1}^n \alpha^{k-1}g(x_k)+\alpha^nh(x_n)\\
&= \sum_{k=1}^n \alpha^{k-1}\beta^{-k}g(x)+\alpha^nh(x_n)\\
&\le\frac{g(x)}{\beta-\alpha}+\alpha^{n}f(\eta).
\end{align*}
Letting $n\to\infty$, we obtain
\[
h(x)\le \frac{g(x)}{\beta-\alpha},
\]
in other words,
\[
f\Bigl(\eta+\frac{G(x)}{\log(1/\beta)}\Bigr)
\le\frac{g(x)}{\beta-\alpha}.
\]
Taking the supremum over $f\in\cF(g,\alpha)$, we 
arrive at the conclusion that
\begin{equation}\label{E:complicated}
F\Bigl(\eta+\frac{G(x)}{\log(1/\beta)}\Bigr)
\le\frac{g(x)}{\beta-\alpha} \quad\bigl(x>0,\,\eta>0,\,\beta\in(\alpha,1)\bigr).
\end{equation}

At this point, we remove the additional
assumptions made about $g$ at the beginning of the proof. 
Suppose now that  $g:(0,\infty)\to[0,\infty)$ is an arbitrary decreasing function.
For each $\epsilon>0$, define
$g_\epsilon:(0,\infty)\to(0,\infty)$ 
as in Lemma~\ref{L:smoothing},
and set
\[
G_\epsilon(x):=\int_0^x\log \frac{g_\epsilon(t)}{g_\epsilon(x)}\,dt
\quad(x\in(0,\infty)).
\]
We claim that $\lim_{\epsilon\to0^+}G_\epsilon(x)=G(x)$
for each $x\in I$ such that $g(x)=g(x-)$. Indeed, by Lemma~\ref{L:smoothing}, if $x\in I$, then
\[
\lim_{\epsilon\to0^+}\log\frac{g_\epsilon(t)}{g_\epsilon(x)}= \log\frac{g(t-)}{g(x-)}.
\]
Hence, if also $g(x)=g(x-)$, then
\[
\lim_{\epsilon\to0^+}\log\frac{g_\epsilon(t)}{g_\epsilon(x)}= \log\frac{g(t)}{g(x)} \quad\text{$t$-almost everywhere}.
\]
Also, given $x\in I$,  the estimate \eqref{E:log+est} implies that,
for all $t\in(0,x)$ and all $\epsilon\in(0,1)$,
\[
\Bigl|\log\frac{g_\epsilon(t)}{g_\epsilon(x)}\Bigr|
\le \Bigl|\log\frac{g_\epsilon(t)}{g(x)}\Bigr|
\le C_x+\log^+g(e^{-1}t)+\log^+(1/t),
\]
where $C_x$ is a constant depending on $x$. This shows that, for $\epsilon\in(0,1)$, the family of integrands in $G_\epsilon$
is dominated by an integrable function.
By the dominated convergence theorem 
(see e.g.\ \cite[Theorem~1.34]{Ru87}), it follows that $\lim_{\epsilon\to0^+}G_\epsilon(x)=G(x)$ for each $x\in I$
with $g(x)=g(x-)$, as claimed.

Since $\cF(g,\alpha)\subset\cF(g_\epsilon,\alpha)$,
by what we have already proved in \eqref{E:complicated},
\begin{equation}\label{E:complicated2}
F\Bigl(\eta+\frac{G_\epsilon(x)}{\log(1/\beta)}\Bigr)
\le\frac{g_\epsilon(x)}{\beta-\alpha} 
\quad\bigl(x>0,\,\eta>0,\,\beta\in(\alpha,1),\,\epsilon>0\bigr).
\end{equation}
If $x\in I$
and $g(x)=g(x-)$,
then from Lemma~\ref{L:smoothing} we have $\lim_{\epsilon\to0^+}g_\epsilon(x)=g(x)$, and, as remarked above, 
$\lim_{\epsilon\to0^+}G_\epsilon(x)=G(x)$.
Therefore, letting $\epsilon\to0^+$ and $\eta\to0^+$ in 
\eqref{E:complicated2}, 
and recalling that $F$ is decreasing, we obtain
\begin{equation}\label{E:nearly}
F\Bigl(\Bigl(\frac{G(x)}{\log(1/\beta)}\Bigr)+\Bigr)
\le\frac{g(x)}{\beta-\alpha} 
\quad\bigl(x\in I,\,g(x)=g(x-),\,\beta\in(\alpha,1)\bigr),
\end{equation}
where $F(y+):=\lim_{z\to y^+}F(z)$.

To deduce \eqref{E:mainineq},
we shall use what we call the `$\beta$-trick'. 
Let $x\in I$. 
By definition we have $F(0)=F(0+)$, so, in 
seeking to establish \eqref{E:mainineq}, 
we may as well suppose that $G(x)>0$.
Choose sequences $x_n\in(0,x)$
and $\beta_n\in(\alpha,\beta)$ such that $x_n\to x$
and $\beta_n\to\beta$, and 
such that $g(x_n)=g(x_n-)$ for each~$n$.
By Lemma~\ref{L:G}, the function $G$ is increasing on $I$, so
$G(x_n)/\log(1/\beta_n)<G(x)/\log(1/\beta)$.
Hence
\[
F\Bigl(\frac{G(x)}{\log(1/\beta)}\Bigr)
\le F\Bigl(\Bigl(\frac{G(x_n)}{\log(1/\beta_n)}\Bigr)+\Bigr)
\le \frac{g(x_n)}{\beta_n-\alpha},
\]
where the last inequality comes from \eqref{E:nearly}.
Letting $\beta_n\to\beta$ and $x_n\to x$, we deduce that
\[
F\Bigl(\frac{G(x)}{\log(1/\beta)}\Bigr)
\le\frac{g(x-)}{\beta-\alpha}
\quad(x\in I,~\beta\in(\alpha,1)),
\]
which is \eqref{E:mainineq}.

Finally, we remark that \eqref{E:mainineq}
implies that \eqref{E:supfinite} holds, provided that $g\not\equiv0$.
If $g\equiv0$, then \eqref{E:supfinite} is obvious anyway.
\end{proof}

We note the following special case of Theorem~\ref{T:main}.

\begin{corollary}\label{C:bounded}
Let $g,\alpha$ and $\cF(g,\alpha)$ be as specified in Problem~\ref{Pb:fnineq}.
Suppose also that $g$ is bounded. Then every function $f\in\cF(g,\alpha)$ is bounded
and satisfies
\begin{equation}\label{E:bounded}
\sup_{x>0}f(x)\le \Bigl(\frac{1}{1-\alpha}\Bigr)\sup_{x>0}g(x).
\end{equation}
\end{corollary}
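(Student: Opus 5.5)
The plan is to apply Theorem~\ref{T:main} and then pass to the limit $x\to0^+$, using the monotonicity of $g$ and the fact, from Lemma~\ref{L:G}(iv), that $G(x)\to0$. Put $M:=\sup_{x>0}g(x)$. If $M=0$, then $g\equiv0$. Every $f\in\cF(g,\alpha)$ then satisfies $f(r+s)\le\alpha f(s)$. Iterating gives $f(x)\le\alpha^n f(x/(n+1))$, but that alone is useless, since the right-hand side involves $f$ at points tending to $0$. I would therefore not treat this case separately and instead give an argument that covers all $M$ at once.

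The direct argument runs as follows. Fix $f\in\cF(g,\alpha)$ and $x>0$. Choose $s>0$ small. Applying \eqref{E:fnineq} with $r=x-s$ gives $f(x)\le M+\alpha f(s)$, which again involves $f$ near $0$. So the key difficulty is to show first that $f$ is bounded at all. For that I would use Theorem~\ref{T:main}: a bounded $g$ trivially satisfies \eqref{E:nc}, so $F(y)<\infty$ for all $y>0$. This alone still does not bound $F$ near $0$.

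The better route is \eqref{E:mainineq} together with Lemma~\ref{L:G}. Assume $g\not\equiv0$; the case $g\equiv0$ is handled by the same limit argument with $g$ replaced by the constant $\delta>0$, since $\cF(0,\alpha)\subset\cF(\delta,\alpha)$, and then letting $\delta\to0$. Since $g$ is bounded and decreasing, $g(x-)\le M$ for every $x\in I$. Also $G(x)\to0$ as $x\to0^+$, and $G(x)/\log(1/\beta)\to0$ as well. By \eqref{E:mainineq},
\[
F\Bigl(\frac{G(x)}{\log(1/\beta)}\Bigr)\le \frac{M}{\beta-\alpha}\qquad(x\in I,\ \beta\in(\alpha,1)).
\]
Now let $x\to0^+$. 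If $G(x)=0$ for some $x$, the left side is $F(0)$ directly. Otherwise the arguments decrease to $0$, and because $F$ is decreasing, the left side tends to $F(0)=\lim_{y\to0^+}F(y)=\sup_{y>0}F(y)$. This gives $\sup_{y>0}F(y)\le M/(\beta-\alpha)$ for every $\beta\in(\alpha,1)$. Letting $\beta\to1^-$ yields $\sup F\le M/(1-\alpha)$, and since each $f\le F$, this is \eqref{E:bounded}.

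The step needing the most care is the passage to the limit at $0$. It requires that the arguments $G(x)/\log(1/\beta)$ in \eqref{E:mainineq} actually reach arbitrarily small values, which is exactly Lemma~\ref{L:G}(iv). It also requires that the convention $F(0)=\lim F(x)$ turns the bound at a single point into a bound on the supremum. For the degenerate case $g\equiv0$, the inclusion $\cF(0,\alpha)\subset\cF(\delta,\alpha)$ with $\delta\to0^+$ gives $f\equiv0$, consistent with \eqref{E:bounded}.
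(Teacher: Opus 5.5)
Your proof is correct and is essentially the paper's argument: bound $g(x-)$ by $\sup g$ in \eqref{E:mainineq}, let $x\to0^+$ using $G(x)\to0$ (this is Lemma~\ref{L:G}\,(v), not (iv)), and then let $\beta\to1^-$. The only difference is that you handle $g\equiv0$ through the inclusion $\cF(0,\alpha)\subset\cF(\delta,\alpha)$ followed by $\delta\to0^+$, which is valid; the paper simply calls this case immediate, and it is: iterating with intermediate points chosen in $(x/2,x)$ gives $f(x)\le\alpha^n f(x/2)\to0$, so your remark that direct iteration is useless was too pessimistic.
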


The simple example $g\equiv 1$ and $f\equiv 1/(1-\alpha)$
shows that the constant $1/(1-\alpha)$ in \eqref{E:bounded} is sharp.

\begin{proof}
If $g\equiv0$, then the result is immediate. 
Suppose therefore that $g\not\equiv0$.
Define $F$ and $G$ as in Theorem~\ref{T:main}.
By that theorem, we have
\[
F\Bigl(\frac{G(x)}{\log(1/\beta)}\Bigr)
\le\frac{g(x-)}{\beta-\alpha}
\quad(g(x)>0,~\beta\in(\alpha,1)).
\]
By Lemma~\ref{L:G}\,(v), we have $\lim_{x\to0^+}G(x)=0$.
Hence, letting $x\to0^+$ and then $\beta\to1^-$, we obtain
\[
F(0)\le \frac{\sup_{x>0}{g(x)}}{1-\alpha}.
\]
This implies the result.
\end{proof}

Our second theorem is a converse to Theorem~\ref{T:main},
showing that the condition \eqref{E:nc} is sharp.

\begin{theorem}\label{T:converse}
Let $g,\alpha$ and $\cF(g,\alpha)$ be as specified in Problem~\ref{Pb:fnineq}. 
If
\begin{equation}\label{E:notlogint}
\int_0^1 \log^+ g(t)\,dt=\infty,
\end{equation}
then
\begin{equation}\label{E:Finfinite}
\sup_{f\in\cF(g,\alpha)}f(x)=\infty
\quad(x\in(0,\infty)).
\end{equation}
\end{theorem}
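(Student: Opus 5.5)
The plan is to build, for each fixed $x>0$ and each $M>0$, an explicit step function $f\in\cF(g,\alpha)$ with $f(x)\ge M$. The construction uses the levels of $g$. Since \eqref{E:notlogint} forces $g$ to be unbounded near $0$, for each $j\ge 1$ we can set
\[
x_j:=\sup\{t>0: g(t)\ge e^{j}\}.
\]
Then $x_j$ is decreasing in $j$, $x_j\to0$, and $g(t)\ge e^{j}$ for $t<x_j$.

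\emph{Step 1 (divergence of $\sum_j x_j$).} On $(x_{j+1},x_j)$ we have $\log^+ g\le j+1$. Summation by parts then gives $\int_0^{x_1}\log^+g\le\sum_j (j+1)(x_j-x_{j+1})\le C\sum_j x_j$. On $[x_1,1]$ the integrand is bounded by $1$, so it contributes finitely. Hence \eqref{E:notlogint} implies $\sum_j x_j=\infty$, and so $\sum_{j\ge J}x_j=\infty$ for every $J$.

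\emph{Step 2 (construction).} Fix an integer $p\ge\log(1/\alpha)$, so that $e^{p}\ge1/\alpha$. Given $x$ and $M$, choose $J$ with $e^{J}\ge M$. Then, by Step 1, choose $N>J$ with $\sum_{j=J}^{N}x_j/p>x$. Lay out consecutive intervals starting at $0$:
\[
I_0,I_1,\dots,I_{N-1}, \qquad |I_k|=x_{N-k}/p .
\]
The intervals nearest $0$ are therefore the shortest. Define $f:=e^{N-k}$ on $I_k$ and $f:=0$ to the right of $I_{N-1}$. This $f$ is decreasing and nonnegative. Moreover $x$ lies in some $I_k$ with $N-k\ge J$, because the intervals $I_0,\dots,I_{N-J}$ already have total length $>x$. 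Hence $f(x)\ge e^{J}\ge M$.

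\emph{Step 3 (verification of \eqref{E:fnineq}).} Let $r,s>0$ with $r+s\in I_k$ (if $r+s$ lies beyond all the intervals, then $f(r+s)=0$ and there is nothing to check), and let $s\in I_m$ with $m\le k$. There are two cases.
\begin{itemize}
\item If $r<x_{N-k}$, then $g(r)\ge e^{N-k}=f(r+s)$ directly.
\item If $r\ge x_{N-k}$, then $r$ is at least $p$ times the length of each of $I_{k-1},I_{k-2},\dots$, since those lengths are $x_{N-k+1}/p,\ldots\le x_{N-k}/p$. So the segment $[s,r+s]$ fully covers at least $p-1$ intermediate intervals, giving $m\le k-p+1$ roughly. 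Consequently $\alpha f(s)\ge \alpha e^{p-1}f(r+s)$.
\end{itemize}
To absorb the off-by-one I will simply take $p\ge \log(1/\alpha)+2$. Then $\alpha f(s)\ge f(r+s)$, and \eqref{E:fnineq} holds in both cases. As $M$ was arbitrary, \eqref{E:Finfinite} follows.

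The main obstacle is the careful bookkeeping in Step 3: counting how many whole intervals lie between $s$ and $r+s$, handling endpoints, and handling the degenerate case where $g$ has plateaus so that the $x_j$ coincide. If some $x_j$ are equal, the corresponding intervals simply have equal lengths, and the argument above still applies. Should the interval count prove delicate, the fallback is to make the value ratio between consecutive blocks $e^{p}$ rather than $e$, at the cost of replacing $e^{j}$-levels by $e^{pj}$-levels in Step 1. That replacement changes $\sum x_j$ only by a constant factor and so preserves divergence.
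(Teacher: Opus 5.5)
Your argument is correct. It takes a genuinely different, discretized route from the paper's.

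\textbf{The paper's route.} The paper first replaces $g$ by a continuous, strictly decreasing minorant $h$ on $(0,b]$ (Lemma~\ref{L:roughing}). It then builds $f_a:=h\circ H_a^{-1}$, where $H_a(x)=\frac{1}{\log(1/\alpha)}\int_a^x\log\frac{h(t)}{h(x)}\,dt$. It verifies \eqref{E:fnineq} through the dichotomy ``either $h(z)\le\alpha h(y)$, or else $H_a(z)-H_a(y)\le z$, so that $g(r)\ge g(z)\ge h(z)$''. Finally it gets $f_a(x)$ large from $H_a(y)\to\infty$ as $a\to0^+$.

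\textbf{How yours corresponds.} Your step function is the discrete analogue of this. The $e$-adic levels and the cut-points $x_j$ take the place of $h$ and $H_a^{-1}$. Your Case~1/Case~2 split is the same dichotomy read in the contrapositive direction. The divergence $\sum_j x_j=\infty$, a layer-cake form of \eqref{E:notlogint}, replaces $H_a(y)\to\infty$.

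\textbf{What each approach buys.} Yours needs no regularization of $g$: plateaus and jumps are harmless, and no homeomorphism has to be inverted. The verification is also pure length counting. The paper's $H_a$ is the same functional $G$ as in Theorem~\ref{T:main}, with $h$ in place of $g$ and $\log(1/\alpha)$ in place of $\log(1/\beta)$. So its construction runs exactly parallel to the upper bound and indicates that the form of \eqref{E:mainineq} is close to optimal. Your construction loses constant factors (integer $p$, $e$-adic levels), which is irrelevant for the qualitative statement.

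\textbf{Off-by-one in Step~3.} This disappears if you fix the convention $I_k=(a_k,a_{k+1}]$ with $a_0=0$. From $s>a_m$ and $r+s\le a_{k+1}$ you get $x_{N-k}\le r<a_{k+1}-a_m\le (k-m+1)\,x_{N-k}/p$, hence $k-m\ge p$. So your original choice $p\ge\log(1/\alpha)$ already suffices. The right count is the number of intervals \emph{met} by $[s,r+s]$, not the number fully covered.

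\textbf{Infinite $x_j$.} If $\inf g\ge e$, then $x_j=+\infty$ for finitely many $j$. Replacing $x_j$ by $\min\{x_j,1\}$ removes this without affecting either case of Step~3 or the divergence in Step~1.
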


For the proof, we use an analogue of Lemma~\ref{L:smoothing}.

\begin{lemma}\label{L:roughing}
Let $g:(0,\infty)\to[0,\infty)$ be a decreasing function
such that \eqref{E:notlogint} holds.
Fix $b>0$ such that $g(eb)>0$, and define
$h:(0,b]\to(0,\infty)$ by
\[
h(x):=e^{-x}\int_x^{ex}\frac{g(t)}{t}\,dt
\quad(x>0).
\]
Then $h$ is a continuous, strictly decreasing
function,  $h\le g$ on $(0,b]$, and
$\int_0^b\log^+h(t)\,dt=\infty$.
\end{lemma}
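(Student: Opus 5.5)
The plan is to rewrite $h$ by the change of variables $t=xs$, which gives
\[
h(x)=e^{-x}\int_1^{e}\frac{g(xs)}{s}\,ds \quad(0<x\le b).
\]
All the required properties will then follow from monotonicity of $g$, together with two sandwich inequalities $g(ex)\le g(xs)\le g(x)$ for $s\in[1,e]$.

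\emph{Continuity.} Since $g$ is decreasing and nonnegative, it is bounded on $[x,\infty)$ for every $x>0$. Hence $g(t)/t$ is locally integrable on $(0,\infty)$. The map $x\mapsto\int_x^{ex}g(t)/t\,dt$ is therefore continuous, being a difference of two continuous indefinite integrals of a locally integrable function. Multiplying by $e^{-x}$ keeps it continuous.

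\emph{Positivity and strict monotonicity.} For $0<x\le b$ and $s\in[1,e]$ we have $xs\le eb$, so $g(xs)\ge g(eb)>0$. Thus $J(x):=\int_1^e g(xs)/s\,ds$ satisfies $J(x)\ge g(eb)>0$. Since $g$ is decreasing, $J$ is decreasing in $x$. Then $h=e^{-x}J$ is the product of the strictly decreasing positive function $e^{-x}$ and the decreasing positive function $J$. For $x<y$ this gives $h(y)=e^{-y}J(y)\le e^{-y}J(x)<e^{-x}J(x)=h(x)$, so $h$ is strictly decreasing and takes values in $(0,\infty)$.

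\emph{The bound $h\le g$.} From $g(xs)\le g(x)$ and $\int_1^e ds/s=1$ we get $J(x)\le g(x)$. Since $e^{-x}\le1$, it follows that $h(x)\le g(x)$.

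\emph{Divergence of the log-integral.} This is the only step requiring a little care. From $g(xs)\ge g(ex)$ we get $h(x)\ge e^{-x}g(ex)\ge e^{-b}g(ex)$ on $(0,b]$. Using the elementary inequality $\log^+(e^{-b}y)\ge\log^+y-b$, this yields
\[
\log^+h(x)\ge\log^+g(ex)-b .
\]
Integrating over $(0,b]$ and substituting $u=ex$ gives
\[
\int_0^b\log^+h\ \ge\ \frac1e\int_0^{eb}\log^+g(u)\,du-b^2 .
\]
It remains to check that $\int_0^c\log^+g=\infty$ for every $c>0$, not just for $c=1$. If $c<1$, then $\int_c^1\log^+g\le(1-c)\log^+g(c)<\infty$, so the divergence in \eqref{E:notlogint} must come from $(0,c)$. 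If $c\ge1$, the integral over $(0,c)$ contains the divergent integral over $(0,1)$. Applying this with $c=eb$ gives $\int_0^b\log^+h=\infty$, which completes the proof.
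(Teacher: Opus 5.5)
Your proof is correct and follows the paper's argument essentially step for step: you use the same substitution $t=xs$, the same sandwich $e^{-x}g(ex)\le h(x)\le e^{-x}g(x)$, and the same lower bound for $\log^+h$ integrated over $(0,b]$. You are slightly more careful than the paper in two places: you note explicitly that $\int_1^e g(xs)\,ds/s\ge g(eb)>0$ is what makes $h$ \emph{strictly} decreasing, and you check that divergence of $\int_0^1\log^+g$ forces divergence of $\int_0^{eb}\log^+g$ even when $eb<1$.
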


\begin{proof}
Continuity of $h$ is clear. Also, we can re-write $h(x)$ as 
\[
h(x)=e^{-x}\int_1^e \frac{g(xt)}{t}\,dt \quad(x\in(0,b]),
\]
from which it follows that $h$ is strictly decreasing. Further,
we have
\[
h(x)= e^{-x}\int_x^{ex}\frac{g(t)}{t}\,dt
\le e^{-x}\int_x^{ex} \frac{g(x)}{t}\,dt=e^{-x}g(x)\le g(x) \quad(x\in(0,b]).
\]
Finally, we also have
\[
h(x)
\ge e^{-x}\int_x^{ex} \frac{g(ex)}{t}\,dt=e^{-x}g(ex)
\quad(x\in(0,b]),
\]
whence, using \eqref{E:notlogint}, we deduce that
\[
\int_0^b\log^+ h(x)\,dx\ge \int_0^b (\log^+ g(ex)-x)\,dx=\infty.\qedhere
\]
\end{proof}

\begin{proof}[Proof of Theorem~\ref{T:converse}]
Fix $b>0$ such that $g(eb)>0$,
and define $h:(0,b]\to(0,\infty)$ as in Lemma~\ref{L:roughing}.
For each $a\in(0,b)$, 
define $H_a:[a,b]\to[0,\infty)$
by
\[
H_a(x):=\frac{1}{\log (1/\alpha)}\int_a^x \log \frac{h(t)}{h(x)}\,dt \quad(x\in[a,b]).
\]
A calculation gives that, if $x_1,x_2\in[a,b]$, then
\[
H_a(x_2)-H_a(x_1)
=\frac{1}{\log(1/\alpha)}\Bigl(\int_{x_1}^{x_2} \log \frac{h(t)}{h(x_1)}\,dt
-(x_2-a)\log\frac{h(x_2)}{h(x_1)}\Bigr).
\]
Since $h$ is strictly decreasing, 
it follows that $H_a$ is strictly increasing.
Also, since $h$ is continuous, $H_a$ is continuous too.
Thus $H_a$
is a homeomorphism of $[a,b]$ onto $[0,H_a(b)]$.
Define $f_a:(0,\infty)\to[0,\infty)$ by
\[
f_a(x):=
\begin{cases}
h\bigl(H_a^{-1}(x)\bigr), &0<x\le H_a(b),\\
0, &x> H_a(b).
\end{cases}
\]
Then $f_a$ is a decreasing function, and we claim that
\[
f_a(r+s)\le g(r)+\alpha f_a(s) \quad(r,s>0).
\]

Indeed, if $r+s>H_a(b)$, then we have
\[
f_a(r+s)=0\le g(r)+\alpha f_a(s).
\]
On the other hand, if $r+s\le H_a(b)$,
then we can write 
$r+s=H_a(z)$ and
$s=H_a(y)$,
where $a<y<z\le b$. 
In that case, $f_a(r+s)=h(z)$
and $f_a(s)=h(y)$, and the inequality to be proved becomes
\[
h(z)\le g\bigl(H_a(z)-H_a(y)\bigr)+\alpha h(y).
\]
This is clearly true if $h(z)\le \alpha h(y)$. In the other case,
where $h(z)>\alpha h(y)$,
a calculation similar to that in \eqref{E:Gineq} gives that 
\[
H_a(z)-H_a(y)\le z\frac{\log h(y)-\log h(z)}{\log(1/\alpha)}\le z,
\]
so 
\[
g(H_a(z)-H_a(y))\ge g(z)\ge h(z),
\]
and once again the desired inequality holds.

To summarize, we have shown that, for each $a\in(0,b)$, 
the function $f_a$ belongs to $\cF(g,\alpha)$. To conclude the proof,
we show that, given $x>0$ and $K>0$,  there exists $a\in(0,b)$
such that $f_a(x)>K$.
Indeed, as $h$ is unbounded, there exists $y\in(0,b)$
such that $h(y)>K$. Since 
$H_a(y)\to\infty$ as $a\to0^+$,
there exists an $a\in(0,y)$ such that $H_a(y)>x$.
Then we have
\[
f_a(x)\ge f_a(H_a(y))=h(y)> K.\qedhere
\]
\end{proof}


\section{Application to subharmonic functions}\label{S:Domar}

We now use the preceding results  to  prove a quantitative version of Domar's uniform boundedness
theorem described in the introduction. To formulate this result, it is convenient first to
establish some notation.

For each $d\ge1$, we denote by $m_d$ the Lebesgue measure on $\RR^d$.
We write $V_d$ for the $m_d$-volume of the Euclidean unit ball in $\RR^d$,
namely 
\[
V_d=\frac{\pi^{d/2}}{\Gamma(1+d/2)}.
\]

Let $X$ be a  Borel subset of $\RR^d$ such that
 $0<m_d(X)<\infty$,
and let $\phi:X\to[0,\infty]$ be a Borel function.
We denote by $\phi^*$
the decreasing rearrangement of $\phi$, 
namely the unique right-continuous  decreasing function 
$\phi^*:(0,\infty)\to[0,\infty]$ such that 
$m_d\{\phi>t\}=m_1\{\phi^*>t\}$
for all $t>0$.

\begin{theorem}\label{T:Domar}
Let $p,q,X,Y,Z,$ be as specified in Problem~\ref{Pb:Domar}.
Let $\phi:X\to(0,\infty]$ be a Borel function whose decreasing rearrangement $\phi^*$ satisfies
\begin{equation}\label{E:phiint}
 \int_0^1 \log^+\phi^*(t^p)\,dt<\infty.
\end{equation}
Define $\cU(Z,\phi)$ as in Problem~\ref{Pb:Domar}. Then
\begin{equation}\label{E:Ubound}
\sup_{u\in\cU(Z,\phi)}u(z)\le \psi\bigl(\dist(z,\partial Z)\bigr) \quad(z\in Z),
\end{equation}
where $\psi:[0,\infty)\to[0,\infty]$ is a decreasing function such that,
for all choices of $\alpha,\beta$ with $0<\alpha<\beta<1$
and all $t\in(0,m_p(X)^{1/p})$,
\begin{equation}\label{E:Domarineq}
\psi\Bigl(\frac{1}{\alpha^{1/p}\log(1/\beta)}\Bigl(\frac{V_q}{V_{p+q}}\Bigr)^{1/p}
\int_0^t \log\Bigl(\frac{\phi^*(s^p)}{\phi^*(t^p)}\Bigr)\,ds\Bigr)
\le \frac{\phi^*(t^p-)}{\beta-\alpha}.
\end{equation}
\end{theorem}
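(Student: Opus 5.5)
The plan is to reduce Theorem~\ref{T:Domar} to Theorem~\ref{T:main}. For each $u$ we build a decreasing function $f_u$ in $\cF(g,\alpha)$, where $g(r):=\phi^*(cr^p)$ and $c:=\alpha V_{p+q}/V_q$. We then take $\psi:=F$, where $F$ is the extremal function from Theorem~\ref{T:main}, and finally change variables.

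\emph{Step 1: from $u$ to $f_u$.} Fix $u\in\cU(Z,\phi)$. Since $\phi\ge0$, the function $u^+:=\max\{u,0\}$ is subharmonic and also lies in $\cU(Z,\phi)$, so we may assume $u\ge0$. For $r>0$ put
\[
f_u(r):=\sup\{u(z): z\in Z,\ \dist(z,\partial Z)\ge r\},
\]
with $f_u(r):=0$ if this set is empty. The set $\{\dist(\cdot,\partial Z)\ge r\}$ is compact because $Z$ is bounded, and $u$ is upper semicontinuous. Hence $f_u(r)<\infty$, and $f_u$ is decreasing and nonnegative. Working with one fixed $u$ is what guarantees finiteness, since the supremum over the whole family is not known to be finite in advance.

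\emph{Step 2: the functional inequality.} Let $r,s>0$ and let $z=(x_0,y_0)$ satisfy $\dist(z,\partial Z)\ge r+s$. Set $B:=B(z,r)\subset Z$. Every point of $B$ has distance at least $s$ from $\partial Z$, so $u\le f_u(s)$ on $B$. Fix $\lambda\ge0$ and let $E:=\{(x,y)\in B:\phi(x)>\lambda\}$. By the sub-mean value property,
\[
u(z)\le \frac{1}{m_{p+q}(B)}\Bigl(\int_{B\setminus E}\phi(x)\,dm_{p+q}+\int_E u\,dm_{p+q}\Bigr)\le \lambda+f_u(s)\,\frac{m_p\{\phi>\lambda\}\,V_q r^q}{V_{p+q}r^{p+q}}.
\]
Here we used that each slice of $B$ over a fixed $x$ has $q$-volume at most $V_qr^q$. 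Now choose $\lambda:=\phi^*(cr^p)$. Right-continuity of $\phi^*$ gives $m_p\{\phi>\lambda\}=m_1\{\phi^*>\lambda\}\le cr^p$, so the coefficient of $f_u(s)$ is at most $\alpha$. Taking the supremum over such $z$ gives
\[
f_u(r+s)\le g(r)+\alpha f_u(s).
\]
If $g(r)=\infty$ this inequality is trivial. However, $\phi^*$ is finite on $(0,\infty)$, because \eqref{E:phiint} forces $\phi^*(t^p)<\infty$ for a.e.\ small $t$ and $\phi^*$ is decreasing. Thus $g$ is an admissible decreasing function and $f_u\in\cF(g,\alpha)$.

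\emph{Step 3: applying Theorem~\ref{T:main} and changing variables.} The substitution $r=c^{-1/p}t$ gives
\[
\int_0^1\log^+g = c^{-1/p}\int_0^{c^{1/p}}\log^+\phi^*(t^p)\,dt.
\]
This is finite by \eqref{E:phiint}, using that $\phi^*$ is decreasing and finite beyond $1$. So \eqref{E:nc} holds. Set $\psi:=F$ on $(0,\infty)$ and $\psi(0):=F(0)$. Then $u(z)\le f_u(\dist(z,\partial Z))\le\psi(\dist(z,\partial Z))$, which is \eqref{E:Ubound}.

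For $t\in(0,m_p(X)^{1/p})$ we have $\phi^*(t^p)>0$, because $\phi>0$ on $X$. Hence $x:=c^{-1/p}t$ lies in $I$. Substituting $\tau=c^{-1/p}s$ gives
\[
G(x)=c^{-1/p}\int_0^t\log\frac{\phi^*(s^p)}{\phi^*(t^p)}\,ds, \qquad g(x-)=\phi^*(t^p-).
\]
Since $c^{-1/p}=\alpha^{-1/p}(V_q/V_{p+q})^{1/p}$, inequality \eqref{E:mainineq} becomes exactly \eqref{E:Domarineq}.

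The main obstacle is Step~2. The delicate points are the sign of $u$, handled by passing to $u^+$, and the correct choice of $\lambda$ via the rearrangement so that the measure bound yields exactly the factor $\alpha$. The remaining steps are bookkeeping.
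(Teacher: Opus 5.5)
Your reduction follows the paper's proof step for step. You pass to $u^+$, set $f_u(r)=\sup\{u(z):\dist(z,\partial Z)\ge r\}$, and use the sub-mean value inequality with $\lambda=\phi^*(cr^p)$ to get $f_u(r+s)\le\phi^*(cr^p)+\alpha f_u(s)$. You then apply Theorem~\ref{T:main} and change variables. Steps~1 and~2 and the final substitution are correct. Your check that $\phi^*<\infty$ on $(0,\infty)$, so that $g$ really maps into $[0,\infty)$, is a detail the paper leaves implicit.

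There is, however, a quantifier gap in the choice $\psi:=F$. Both $g(r)=\phi^*(\alpha V_{p+q}r^p/V_q)$ and the class $\cF(g,\alpha)$ depend on $\alpha$, so $F=F_\alpha$ does too. What you have shown is weaker than the statement: for each fixed $\alpha\in(0,1)$ there is a decreasing $\psi_\alpha$ satisfying \eqref{E:Ubound}, and satisfying \eqref{E:Domarineq} for that one $\alpha$ and all $\beta\in(\alpha,1)$. The theorem asserts a single $\psi$ for which \eqref{E:Domarineq} holds for all pairs $0<\alpha<\beta<1$ simultaneously. Nothing in your argument relates $F_\alpha$ to $F_{\alpha'}$ for $\alpha'\ne\alpha$.

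The repair is exactly what the paper does. The function $f_u$ itself does not depend on $\alpha$, so define $\psi(s):=\sup_{u\in\cU(Z,\phi)}f_u(s)$ for $s>0$, and $\psi(0):=\lim_{s\to0^+}\psi(s)$. This $\psi$ is decreasing and gives \eqref{E:Ubound} at once. Because $f_u\in\cF(g_\alpha,\alpha)$ for every $\alpha$, you get $\psi\le F_\alpha$ on $(0,\infty)$ for every $\alpha\in(0,1)$. Taking limits, the same holds at $0$, so \eqref{E:Domarineq} holds for all $\alpha,\beta$ at once. Taking $\psi:=\inf_{0<\alpha<1}F_\alpha$ on $[0,\infty)$ would work equally well.
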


\begin{remark}
The quantities $\alpha,\beta$ are parameters subject only to the constraint that $0<\alpha<\beta<1$.
They may be chosen to optimize the bound on $\psi$.
\end{remark}

\begin{proof}
Let $u\in\cU(Z,\phi)$. We shall estimate $u$.
Replacing $u$ by $\max\{u,0\}$,
we may as well suppose  that $u\ge0$.
For convenience, we also set $n:=p+q$.

Let $R$ be the radius of the largest open ball contained
in $Z$. 
For $t\in(0,R]$, set
\[
\tilde{u}(t):=\sup\{u(z):z\in Z,~\dist(z,\partial Z)\ge t\}.
\]
This is finite, since it is the supremum of an upper
semicontinuous function over a compact set.
Extend $\tilde{u}$ to the whole of $(0,\infty)$
by setting $\tilde{u}\equiv 0$ on $(R,\infty)$.
Clearly $\tilde{u}:(0,\infty)\to[0,\infty)$ is a decreasing function.

Let $t\in(0,R]$ and let $z\in Z$ be a point with
$\dist(z,\partial Z)\ge t$. Let $r\in(0,t)$, and let $B$
be the closed ball with centre $z$ and radius $r$.
Then $B\subset Z$, and by the triangle inequality
we have the simple estimate $u\le \tilde{u}(t-r)$ on~$B$.
By the sub-mean
inequality for subharmonic functions, for each $\lambda>0$ we have
\begin{align*}
u(z)
&\le \frac{1}{m_n(B)}\int_B u\,dm_n\\
&\le \frac{1}{m_n(B)}\int_{B\cap(\{\phi\le \lambda\}\times Y)}u\,dm_n
+ \frac{1}{m_n(B)}\int_{B\cap(\{\phi> \lambda\}\times Y)}u\,dm_n\\
&\le \frac{1}{m_n(B)}\int_{B\cap (\{\phi\le \lambda\}\times Y)}\lambda \,dm_n
+ \frac{1}{m_n(B)}\int_{B\cap (\{\phi> \lambda\}\times Y)}\tilde{u}(t-r)\,dm_n\\
&\le \lambda +\frac{m_n(B\cap (\{\phi>\lambda\}\times Y)}{m_n(B)}\tilde{u}(t-r).
\end{align*}
Now $m_n(B)=V_nr^n$. Also, we have
\[
m_n\bigl(B\cap (\{\phi>\lambda\}\times Y)\bigr)
\le m_p\{\phi>\lambda\}(V_qr^q)
=m_1\{\phi^*>\lambda\}(V_qr^q).
\]
Hence
\[
u(z)\le \lambda+\frac{V_q r^q}{V_nr^n}m_1\{\phi^*>\lambda\}\tilde{u}(t-r)
= \lambda+\frac{V_q}{V_nr^p}m_1\{\phi^*>\lambda\}\tilde{u}(t-r).
\]
We are free to choose $\lambda$ how we please.
Let us take $\lambda:=\phi^*(\alpha V_nr^p/V_q)$, where $\alpha\in(0,1)$.
Then we have
$m_1\{\phi^*>\lambda\}\le \alpha V_nr^p/V_q$, and so
\[
u(z)\le \phi^*(\alpha V_nr^p/V_q)+\alpha \tilde{u}(t-r).
\]
Taking the supremum over all $z\in Z$ with
$\dist(z,\partial Z)\ge t$, we obtain the relation
\[
\tilde{u}(t)\le \phi^*(\alpha V_nr^p/V_q)+\alpha \tilde{u}(t-r).
\]
This is valid for all $t\in(0,R]$. 
It also holds trivially if $t\in(R,\infty)$,
since $\tilde{u}$ is zero on this set.
Thus, finally, we deduce that
\begin{equation}\label{E:relation}
\tilde{u}(r+s)\le \phi^*(\alpha V_nr^p/V_q)+\alpha \tilde{u}(s)
\quad(r,s>0,~\alpha\in(0,1)).
\end{equation}

We now apply Theorem~\ref{T:main} with
\[
g(r):=\phi^*(\alpha V_n r^p/V_q).
\]
The condition \eqref{E:nc} for $g$ 
is equivalent to the condition \eqref{E:phiint} for $\phi^*$.
By \eqref{E:relation}, each function $\tilde{u}$ with $u\in\cU(Z,\phi)$ belongs to $\cF(g,\alpha)$. Hence, if 
$F$ denotes the function defined in
Theorem~\ref{T:main}, then
$\tilde{u}(s)\le F(s)$ for all $s>0$.
Consequently, if we define
\[
\psi(s):=\sup_{u\in\mathcal U(Z,\phi)}\widetilde u(s) \qquad(s>0),
\]
and extend $\psi$ to $0$ by setting $\psi(0):=\lim_{s\to0^+}\psi(s)$,
then
\[
\psi(s)\le F(s)\qquad(s\ge0).
\]
In particular, Theorem~\ref{T:main} gives
\[
\psi\Bigl(\frac{G(r)}{\log(1/\beta)}\Bigr)
\le \frac{g(r-)}{\beta-\alpha}
\quad(g(r)>0,\ 0<\alpha<\beta<1),
\]
where $G$ is defined by \eqref{E:Gdef}.
Explicitly, for all $r$ with
$\phi^*(\alpha V_n r^p/V_q)>0$ and all $\alpha,\beta$ with
$0<\alpha<\beta<1$,
\[
\psi\Bigl(\frac{1}{\log(1/\beta)}\int_0^r
\log\frac{\phi^*(\alpha V_nx^p/V_q)}
{\phi^*(\alpha V_nr^p/V_q)}\,dx\Bigr)
\le
\frac{\phi^*((\alpha V_nr^p/V_q)-)}{\beta-\alpha}.
\]
Making the substitutions
\[
s^p:=\alpha V_nx^p/V_q,
\qquad
t^p:=\alpha V_nr^p/V_q,
\]
and using the fact that $\phi^*>0$ on $(0,m_p(X))$ (because
$\phi>0$ on $X$), we obtain
\[
\psi\Bigl(\frac{1}{\alpha^{1/p}\log(1/\beta)}
\Bigl(\frac{V_q}{V_n}\Bigr)^{1/p}
\int_0^t \log\frac{\phi^*(s^p)}{\phi^*(t^p)}\,ds\Bigr)
\le
\frac{\phi^*(t^p-)}{\beta-\alpha}
\]
for all $t\in(0,m_p(X)^{1/p})$ and all
$0<\alpha<\beta<1$. Since $n=p+q$, this is precisely
\eqref{E:Domarineq}. 
The definition of $\psi$ also immediately gives
\eqref{E:Ubound}.
\end{proof}

We conclude this section with a simple illustrative example.
In what follows, we write $\|\cdot\|$ to denote the Euclidean norm.

\begin{corollary}\label{C:example}
Let $p,q\ge1$, let $B_p$ be the open unit ball in $\RR^p$,
let $Y$ be a bounded open subset of $\RR^q$, and let $Z:=B_p\times Y$.
Let $u$ be a subharmonic function on $Z$ such that
\[
u(x,y)\le \|x\|^{-\kappa} \quad(x\in B_p,\,y\in Y),
\]
where $\kappa$ is a constant with $\kappa>0$.
Then
\[
u(z)\le C\dist(z,\partial Z)^{-\kappa} \quad(z\in Z,\,\dist(z,\partial Z)<\delta),
\]
where, writing $\tau:=\kappa/p$, we have
\begin{equation}\label{E:Cdelta}
C= \frac{e^\kappa(1+\tau)^{1+\kappa+\tau}}{\tau^\tau}
\Bigl(\frac{V_pV_q}{V_{p+q}}\Bigr)^\tau 
\quad\text{and}\quad
\delta= \frac{C^{1/\kappa}}{(1+\tau)^{1/\kappa}e^{1/(1+\tau)}}.
\end{equation}
\end{corollary}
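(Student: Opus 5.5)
The plan is to apply Theorem~\ref{T:Domar} directly with $X=B_p$ and $\phi(x):=\|x\|^{-\kappa}$ (with $\phi(0)=\infty$), which is a Borel function $X\to(0,\infty]$. Then I will compute everything explicitly and optimize over the free parameters $\alpha,\beta$.

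\emph{Step 1: the rearrangement.} Since $m_p\{\phi>\lambda\}=m_p\{\|x\|<\lambda^{-1/\kappa}\}=V_p\lambda^{-p/\kappa}$ for $\lambda\ge1$, the decreasing rearrangement is
\[
\phi^*(s)=(V_p/s)^{\tau} \quad (0<s<V_p),
\]
where $\tau=\kappa/p$. This is continuous there, so $\phi^*(t^p-)=\phi^*(t^p)=V_p^\tau t^{-\kappa}$ for $0<t<V_p^{1/p}=m_p(X)^{1/p}$. Condition \eqref{E:phiint} holds because $\log^+\phi^*(t^p)\le C+\kappa\log(1/t)$ is integrable. The key integral is elementary:
\[
\int_0^t\log\frac{\phi^*(s^p)}{\phi^*(t^p)}\,ds=\kappa\int_0^t\log\frac{t}{s}\,ds=\kappa t .
\]

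\emph{Step 2: rewrite \eqref{E:Domarineq}.} Put $A:=(V_q/V_{p+q})^{1/p}$ and $d:=\kappa A t/(\alpha^{1/p}\log(1/\beta))$. Then \eqref{E:Domarineq} reads $\psi(d)\le V_p^\tau t^{-\kappa}/(\beta-\alpha)$. Substituting $t=d\,\alpha^{1/p}\log(1/\beta)/(\kappa A)$ and using $V_p^\tau A^\kappa=(V_pV_q/V_{p+q})^\tau$ gives
\[
\psi(d)\le \Bigl(\frac{V_pV_q}{V_{p+q}}\Bigr)^\tau\frac{\kappa^\kappa}{(\beta-\alpha)\alpha^\tau(\log(1/\beta))^\kappa}\,d^{-\kappa}.
\]
This is valid for $0<d<V_p^{1/p}\kappa A/(\alpha^{1/p}\log(1/\beta))$.

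\emph{Step 3: optimize.} For fixed $\beta$, maximize $(\beta-\alpha)\alpha^\tau$ at $\alpha=\tau\beta/(1+\tau)$, which gives the value $\beta^{1+\tau}\tau^\tau/(1+\tau)^{1+\tau}$. Then maximize $\beta^{1+\tau}(\log(1/\beta))^\kappa$. Setting the logarithmic derivative to zero gives $\log(1/\beta)=\kappa/(1+\tau)$, with value $e^{-\kappa}(\kappa/(1+\tau))^\kappa$. The constant then collapses to $e^\kappa(1+\tau)^{1+\kappa+\tau}\tau^{-\tau}(V_pV_q/V_{p+q})^\tau=C$.

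With these choices the range bound becomes $V_p^{1/p}A(1+\tau)\alpha^{-1/p}$. Now $\alpha^{-1/p}=((1+\tau)/\tau)^{1/p}\beta^{-1/p}$ and $\beta^{-1/p}=e^{\tau/(1+\tau)}$. A short comparison with $C^{1/\kappa}$ shows this equals the stated $\delta$. I expect this bookkeeping to be the only delicate point, namely checking that the two expressions for $\delta$ agree.

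\emph{Step 4: conclude.} Finally, \eqref{E:Ubound} gives $u(z)\le\psi(\dist(z,\partial Z))\le C\dist(z,\partial Z)^{-\kappa}$ whenever $\dist(z,\partial Z)<\delta$.
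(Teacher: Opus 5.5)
Your proposal is correct and follows the paper's proof step by step: you apply Theorem~\ref{T:Domar} with $X=B_p$ and $\phi(x)=\|x\|^{-\kappa}$, compute $\phi^*$ and the integral $\kappa t$, change variables, and choose $\alpha=\tau\beta/(1+\tau)$ and $\beta=e^{-\kappa/(1+\tau)}$, which are exactly the paper's values. Your two-stage optimization and your check that the range bound equals the stated $\delta$ are both correct; they just spell out calculations the paper leaves to the reader.
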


\begin{proof}
We apply Theorem~\ref{T:Domar} with $X:=B_p$
and $\phi(x):=\|x\|^{-\kappa}$. A simple calculation
gives that 
\[
\phi^*(t)=
\begin{cases}
(t/V_p)^{-\kappa/p}, &0<t<V_p,\\
0, &t\ge V_p.
\end{cases}
\]
By Theorem~\ref{T:Domar},
$u(z)\le \psi(\dist(z,\partial Z))$ for $z\in Z$,
where $\psi:[0,\infty)\to[0,\infty]$ is a decreasing function
such that, for all $t\in(0,V_p^{1/p})$ and all $0<\alpha<\beta<1$,
\[
\psi\Bigl(\frac{1}{\alpha^{1/p}\log(1/\beta)}\Bigl(\frac{V_q}{V_{p+q}}\Bigr)^{1/p}
\int_0^t \log\Bigl(\frac{s^{-\kappa}}{t^{-\kappa}}\Bigr)\,ds\Bigr)
\le \frac{t^{-\kappa}/V_p^{-\kappa/p}}{\beta-\alpha}.
\]
Evaluating the integral and performing a change of variable, 
we deduce that,
for all $r$ such that $0<r<(V_pV_q/V_{p+q})^{1/p}\kappa/(\alpha^{1/p}\log(1/\beta))$ and all $\alpha,\beta$
with $0<\alpha<\beta<1$,
\[
\psi(r)\le \frac{\kappa^\kappa}{(\beta-\alpha)\alpha^{\kappa/p}(\log(1/\beta))^\kappa}\Bigl(\frac{V_pV_q}{V_{p+q}}\Bigr)^{\kappa/p}r^{-\kappa}.
\]
We are free to choose $\alpha,\beta$ subject to the constraint
$0<\alpha<\beta<1$. 
A calculation shows that $(\beta-\alpha)\alpha^{\kappa/p}(\log(1/\beta))^\kappa$
is maximized by taking
\[
\alpha=\frac{\kappa}{\kappa+p}e^{-\kappa p/(\kappa+p)}
\quad\text{and}\quad
\beta=e^{-\kappa p/(\kappa+p)}.
\]
Substituting these values into the inequality for $\psi$,
we find that $\psi(r)\le Cr^{-\kappa}$ for  
$r\in(0,\delta)$,
where $C$ and $\delta$ are given by \eqref{E:Cdelta}.
The result follows.
\end{proof}


\section{Concluding remarks}\label{S:conclusion}

\subsection{Another problem of Domar}

In \cite[\S2]{Do57}, Domar also studied the following apparently simpler
problem.

\begin{problem}\label{Pb:Domar2}
Let $Z$ be a bounded open
subset of $\RR^p$, 
where $p\ge2$.
Given a  function $\phi:Z\to[0,\infty]$, 
 let $\cU(Z,\phi)$ be the family
of all subharmonic functions $u$ on $Z$ such that
\[
u(z)\le \phi(z) \quad(z\in Z).
\]
Is $\sup_{u\in\cU(Z,\phi)}u(z)<\infty$ for all $z\in Z$?
\end{problem}

This can be viewed as the version of Problem~\ref{Pb:Domar}
in which $q=0$. Theorem~\ref{T:Domar} and its proof carry
over to this case essentially without change. We record
here the resulting theorem. 

\begin{theorem}\label{T:Domar2}
Let $Z$ be a bounded open subset of $\RR^p$, where $p\ge2$.
Let $\phi:Z\to(0,\infty]$ be a Borel function whose decreasing rearrangement $\phi^*$ satisfies
\begin{equation}\label{E:phiint2}
 \int_0^1 \log^+\phi^*(t^p)\,dt<\infty.
\end{equation}
Define $\cU(Z,\phi)$ as in Problem~\ref{Pb:Domar2}. Then
\[
\sup_{u\in\cU(Z,\phi)}u(z)\le \psi\bigl(\dist(z,\partial Z)\bigr) \quad(z\in Z),
\]
where $\psi:[0,\infty)\to[0,\infty]$ is a decreasing function such that,
for all choices of $\alpha,\beta$ with $0<\alpha<\beta<1$
and all $t\in(0,m_p(Z)^{1/p})$,
\[
\psi\Bigl(\frac{1}{\alpha^{1/p}\log(1/\beta)}\Bigl(\frac{1}{V_{p}}\Bigr)^{1/p}
\int_0^t \log\Bigl(\frac{\phi^*(s^p)}{\phi^*(t^p)}\Bigr)\,ds\Bigr)
\le \frac{\phi^*(t^p-)}{\beta-\alpha}.
\]
\end{theorem}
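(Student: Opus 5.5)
The plan is to repeat the proof of Theorem~\ref{T:Domar} verbatim with $q=0$. The only change is that the product structure $X\times Y$ disappears, so the measure estimate becomes simpler.

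Let $u\in\cU(Z,\phi)$. Replacing $u$ by $\max\{u,0\}$, we may assume $u\ge0$. Let $R$ be the inradius of $Z$, and for $t\in(0,R]$ define
\[
\tilde u(t):=\sup\{u(z):\dist(z,\partial Z)\ge t\},
\]
with $\tilde u\equiv0$ on $(R,\infty)$. This quantity is finite by upper semicontinuity and compactness, and $\tilde u$ is decreasing.

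Next take $t\in(0,R]$, a point $z$ with $\dist(z,\partial Z)\ge t$, some $r\in(0,t)$, and the closed ball $B$ of centre $z$ and radius $r$. On $B$ we have $u\le\tilde u(t-r)$. For $\lambda>0$, split $B$ into $B\cap\{\phi\le\lambda\}$ and $B\cap\{\phi>\lambda\}$. On the first piece $u\le\phi\le\lambda$. The second piece has measure at most $m_p\{\phi>\lambda\}=m_1\{\phi^*>\lambda\}$. The sub-mean value inequality with $m_p(B)=V_pr^p$ then gives
\[
u(z)\le \lambda+\frac{m_1\{\phi^*>\lambda\}}{V_pr^p}\,\tilde u(t-r).
\]
Choose $\lambda:=\phi^*(\alpha V_pr^p)$. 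By right-continuity, $m_1\{\phi^*>\lambda\}\le\alpha V_pr^p$. Taking the supremum over $z$ yields
\[
\tilde u(r+s)\le \phi^*(\alpha V_p r^p)+\alpha\tilde u(s)\qquad(r,s>0),
\]
which holds trivially when $r+s>R$. This is exactly \eqref{E:relation} with $V_q/V_n$ replaced by $1/V_p$; formally it is the case $V_0=1$.

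Now apply Theorem~\ref{T:main} with $g(r):=\phi^*(\alpha V_pr^p)$. This $g$ is decreasing and nonnegative. Substituting $t=(\alpha V_p)^{1/p}r$ shows that \eqref{E:nc} is equivalent to \eqref{E:phiint2}. Hence every $\tilde u$ lies in $\cF(g,\alpha)$. Define $\psi(s):=\sup_u\tilde u(s)$ and $\psi(0):=\psi(0+)$. Then $\psi\le F$, and \eqref{E:mainineq} gives
\[
\psi\bigl(G(r)/\log(1/\beta)\bigr)\le g(r-)/(\beta-\alpha)
\]
whenever $g(r)>0$. The bound $u(z)\le\psi(\dist(z,\partial Z))$ is immediate from the definition of $\tilde u$.

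It remains to change variables via $s^p=\alpha V_px^p$ and $t^p=\alpha V_pr^p$. Then
\[
\int_0^r\log\frac{g(x)}{g(r)}\,dx=(\alpha V_p)^{-1/p}\int_0^t\log\frac{\phi^*(s^p)}{\phi^*(t^p)}\,ds,
\]
and $g(r-)=\phi^*(t^p-)$. Since $\phi>0$ on $Z$, we have $\phi^*>0$ on $(0,m_p(Z))$, so the condition $g(r)>0$ covers every $t\in(0,m_p(Z)^{1/p})$. This gives the stated inequality with constant $(1/V_p)^{1/p}$.

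There is no real obstacle. The only points needing care are the measure estimate $m_p(B\cap\{\phi>\lambda\})\le m_1\{\phi^*>\lambda\}$, which now requires no product slicing, and the use of right-continuity of $\phi^*$ in choosing $\lambda$. The hypothesis $p\ge2$ plays no role in the argument; it only reflects the setting of Problem~\ref{Pb:Domar2}.
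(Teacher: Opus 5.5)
Your proposal is correct and matches the paper's approach exactly. The paper just says that the proof of Theorem~\ref{T:Domar} carries over with $q=0$, formally $V_0=1$ and with the slicing estimate replaced by $m_p(B\cap\{\phi>\lambda\})\le m_p\{\phi>\lambda\}$, and you have written that argument out in full. (A minor point: the bound $m_1\{\phi^*>\lambda\}\le\alpha V_pr^p$ for $\lambda=\phi^*(\alpha V_pr^p)$ follows from the monotonicity of $\phi^*$ alone, so right-continuity is not needed there.)
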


In this case, however, it is known that the condition \eqref{E:phiint2}
is not sharp.
Indeed,  Domar showed  in \cite{Do88} that
Problem~\ref{Pb:Domar2} still has an affirmative answer 
if \eqref{E:phiint2} is weakened to
\[
 \int_0^1 \Bigl(\log^+\phi^*(t^p)\Bigl)^{1-1/p}\,dt<\infty.
\]
The proof is quite subtle, and this result does not appear susceptible
to our methods.

\subsection{More on quantitative estimates}
Though we have deduced our quantitative versions of Domar's
theorems from the functional inequality Theorem~\ref{T:main},
it is also possible to obtain estimates directly using Domar's
original proofs.
This is the approach adopted  in 
\cite{BY26} and \cite{Lo14}. The estimates obtained 
therein  have a form that is different from ours, so it is difficult
to compare them. 

In \cite{BY26}, the authors ask whether their
quantitative Domar theorems are optimal in some sense,
and it is natural to pose the same question for our own
estimates. The answer in our case is definitely negative.
Indeed, the estimate \eqref{E:Domarineq}
can   be improved, in principle, by the simple expedient of re-applying the same theorem
with $u$ replaced by $\max\{0,u\}^\gamma$ (which is still subharmonic if $\gamma>1$) and $\phi$ replaced by $\phi^\gamma$,
then taking $\gamma$-th roots and choosing an optimal $\gamma$ in the resulting inequality.
For instance, a calculation shows that,
if this technique is applied to the example in Corollary~\ref{C:example},
then one obtains a strictly better estimate for $u$ 
whenever $(1+\kappa/p)\log(1+\kappa/p)>\kappa/p^2$.

In fact, it has been known for some time that optimal upper
bounds for $\sup_{u\in\cU(Z,\phi)}u(z)$ can be derived,
at least in principle, using duality theory.
Specifically, 
once one has a locally bounded majorant,  one can deduce the optimal upper bound using Jensen measures \cite[Corollary~1.7]{CR97} or  harmonic measures \cite[Theorem~1.3]{CR01}.

\bibliographystyle{plain}
\bibliography{biblist}

\end{document}